 \newtheorem{thm}{Theorem}[section]
 \newtheorem{cor}[thm]{Corollary}
 \newtheorem{prop}[thm]{Proposition}
 \theoremstyle{definition}
 \newtheorem{defn}[thm]{Definition}
 \theoremstyle{remark}
 \numberwithin{equation}{section}
\newcommand{\alp}{\alpha}
\newcommand{\la}{\lambda}
\newcommand{\eps}{\varepsilon}
\newcommand{\noi}{\noindent}
\newcommand{\rr}{\mathbb{R}}
\newcommand{\nn}{\mathbb{N}}
\newcommand{\zz}{\mathbb{Z}}
\newcommand{\bm}{\begin{pmatrix}}
\newcommand{\fm}{\end{pmatrix}}
\newcommand{\und}{\underset}
\newcommand{\inte}{\cap}
\newcommand{\dd}{\ {\rm d}}
\newcommand{\inc}{\subset}
\newcommand{\ieg}{\left[\kern-0.15em\left[ \,}
\newcommand{\ied}{\, \right]\kern-0.15em\right]}
\newcommand{\dis}{\displaystyle}
\newcommand{\hs}{\hspace{0.02cm}}
\newcommand{\ft}{\widetilde{f}}
\newcommand{\gat}{\widetilde{A}}
\newcommand{\at}{\widetilde{a}}
\newcommand{\Br}{\text{Br}}
\newcommand{\lun}{\ell_{1}}
\newcommand{\Lun}{L_{1}}
\newcommand{\Linf}{L_{\infty}}
\newcommand{\fz}{\mathcal{F}}
\newcommand{\lipz}{\text{Lip}_0}
\newcommand{\lipfss}{Lipschitz-free spaces }
\newcounter{cnt1}
\newcounter{cnt2}
\newcounter{cnt3}
\newcommand{\blr}{\begin{list}{$($\roman{cnt1}$)$}
 {\usecounter{cnt1} \setlength{\topsep}{0pt}
 \setlength{\itemsep}{0pt}}}
\newcommand{\bla}{\begin{list}{$($\alph{cnt2}$)$}
 {\usecounter{cnt2} \setlength{\topsep}{0pt}
 \setlength{\itemsep}{0pt}}}
\newcommand{\bln}{\begin{list}{$($\arabic{cnt3}$)$}
 {\usecounter{cnt3} \setlength{\topsep}{0pt}
 \setlength{\itemsep}{0pt}}}
\newcommand{\el}{\end{list}}
\begin{document}
%
%
%
%
\title[Tree metrics and their Lipschitz-free spaces]
 {Tree metrics and their Lipschitz-free spaces}
\author{A. Godard}
\address{
Universit\'e Paris 6\\
IMJ - Projet Analyse Fonctionnelle\\
Bo\^ite 186\\
4 place Jussieu\\
75252 Paris C\' edex 05\\
France
}
\email{godard@math.jussieu.fr}

\subjclass[2000]{Primary 46B04; Secondary 05C05, 46B25, 54E35}

\keywords{Lipschitz-free spaces, subspaces of $L_{1}$, metric trees, four-point property.}

\date{\today}

\begin{abstract}
We compute the Lipschitz-free spaces of subsets of the real line and characterize subsets of metric trees by the fact that their Lipschitz-free space is isometric to a subspace of $L_1$.
\end{abstract}

\maketitle
\section{Introduction}
Let $M$ be a pointed metric space (a metric space with a designated origin denoted $0$). We denote by $\lipz(M)$ the space of all real-valued Lipschitz functions on $M$ which vanish at $0$ equipped with the standard Lipschitz norm
$$\|f\|=\inf \ \{K \in \rr \  / \ \forall \, x,y \in M \  \  |f(y)-f(x)| \leq K \, d(x,y) \} \quad .$$
The closed unit ball of this space being compact for the topology of pointwise convergence on $M$, $\lipz(M)$ has a canonical predual, namely the closed linear span of the evaluation functionals $\delta_x$ (whether this predual is unique up to a linear isometry is an open question). We call Lipschitz-free space over $M$ and denote by $\fz(M)$ this predual. Lipschitz-free spaces are studied extensively in \cite{Wea1999} where they are called Arens-Eels spaces. Note that for every point $a$ of $M$, $f \mapsto f-f(a)$ is a weak*-weak* continuous linear isometry between $\lipz(M)$ and $\text{Lip}_a(M)$: the choice of different base points therefore yields isometric Lipschitz-free spaces.
\\
The key property of the Lipschitz-free space $\fz(M)$ is that a Lipschitz map between two metric spaces admits a linearization between the corresponding Lipschitz-free spaces. More precisely, if $M_1$ and $M_2$ are pointed metric spaces and $L : M_1 \to M_2$ is a Lipschitz map satisfying $L(0)=0$, there exists a unique linear map $\widehat{L} : \fz(M_1) \to \fz(M_2)$ such that the following diagram commutes:
\begin{equation*}
\begin{CD}
M_1 @>L>> M_2 \\
@V{\delta}VV @VV{\delta}V \\
\fz(M_1) @>\widehat{L}>> \fz(M_2) 
\end{CD}
\end{equation*}
(we refer to \cite{GodKal2003} for details on this functorial property). In particular, 
$\fz(M')$ is an isometric subspace of $\fz(M)$ whenever $M'$ isometrically embeds into $M$.\\
Differentiation almost everywhere yields a weak*-weak* continuous linear isometry between $\lipz(\rr)$ and $\Linf$ which predualizes into a linear isometry between $\fz(\rr)$ and $L_1$ (the discrete version of this argument provides $\fz(\nn) \equiv \ell_1$).\\
According to a result of G. Godefroy and M. Talagrand (\cite{GodTal1981}), any Banach space isomorphic to a subspace of $\Lun$ is unique isometric predual: given the open question mentioned above, it is therefore natural to highlight metric spaces which admit a subspace of $L_1$ as their Lipschitz-free space.\\
The purpose of this article is to show that $\fz(M)$ is linearly isometric to a subspace of $L_1$ if and only if $M$ isometrically embeds into an $\rr$-tree (i.e. a metric space where any couple of points is connected by a unique arc isometric to a compact interval of the real line).\\
This gives a complement to a result of A. Naor and G. Schechtman that $\fz(\rr^2)$ is not isomorphic to a subspace of $L_1$ (\cite{NaoSch2007}).\\
We also compute the Lipschitz-free spaces of subsets of the real line by integrating bounded measurable functions with respect to a measure adapted to the subset considered.

\section{Preliminaries}

In this section we define differentiation for functions defined on a subset of a pointed $\rr$-tree, and introduce measures which enable to retrieve the values of such absolutely continuous functions by integrating their derivative.\\
We first recall the definition of $\rr$-trees, and define the analogous of the Lebesgue measure for those metric spaces (this measure is called the length measure).

\begin{defn}
\label{defrt}
An $\rr$-tree is a metric space $T$ satisfying the following two conditions.\\
\textup{(1)} For any points $a$ and $b$ in $T$, there exists a unique isometry $\phi$ of the closed interval $[\, 0 \, , d(a,b) \, ]$ into $T$ such that $\phi(0)=a$ and $\phi(d(a,b))=b$.\\
\textup{(2)} Any one-to-one continuous mapping $\varphi : [\, 0 \, , 1 \, ] \to T$ has same range as the isometry $\phi$ associated to the points $a=\varphi(0)$ and $b=\varphi(1)$.
\end{defn}


If $T$ is an $\rr$-tree, we denote for any $x$ and $y$ in $T$, $\phi_{xy}$ the unique isometry associated to $x$ and $y$ as in definition \ref{defrt} and write $[ \, x \, , y \, ]$ for the range of $\phi_{xy}$; such subsets of $T$ are called segments. We say that a subset $A$ of $T$ is measurable whenever $\phi_{xy}^{-1}(A)$ is Lebesgue-measurable for any $x$ and $y$ in $T$. If $A$ is measurable and $S$ is a segment $[ \, x \, , y \, ]$, we write $\lambda_S(A)$ for $\lambda \left(\phi_{xy}^{-1}(A) \right)$ where $\lambda$ is the Lebesgue measure on $\rr$. We denote $\mathcal{R}$ the set of all subsets of $T$ which can be written as a finite union of disjoints segments and for $R=\bigcup_{k=1}^{n} S_k$ (with disjoints $S_k$) in $\mathcal{R}$, we put
$$\la_R(A)=\sum_{k=1}^n \lambda_{S_k}(A) \quad .
$$

\noi Now,
$$\la_T(A)=\sup_{R \in \mathcal{R}} \la_R(A)$$
defines a measure on the $\sigma$-algebra of $T$-measurable sets such that
\begin{equation}
\label{chvar}
\int_{[ x , y ]} f(u) \dd \la_T(u)
=
\int_{0}^{d(x , y)} f(\phi_{xy}(t)) \dd t
\end{equation}
for any $f \in \Lun(T)$ and $x$, $y$ in $T$.\\

\begin{defn}
\label{defmu}
Let $T$ be a pointed $\rr$-tree, and $A$ a closed subset of $T$. We denote $\mu_A$ the positive measure defined by
$$
\mu_A=\la_A + \sum_{a \in A} L(a) \hs \delta_{a}
$$
where $\la_A$ is the restriction of the length measure on $A$, $L(a)=\inf_{x \in A \inte [ 0 \hs , \hs a [} d(a \, , x)$ and $\delta_{a}$ is the Dirac measure on $a$.
\end{defn}
This measure takes into account the gaps in $A$ by shifting their mass to the next available point (away from the root). When $L(a)>0$, we say that $a$ is root-isolated in $A$, and we denote $\gat$ the set of root-isolated points in $A$.

\begin{prop}
\label{dual}
If $T$ is a pointed $\rr$-tree and $A$ is a closed subset of $T$, $L_\infty (\mu_A)$ is isometric to the dual space of $L_1(\mu_A)$.
\end{prop}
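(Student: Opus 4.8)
The plan is to deduce the statement from the classical measure-theoretic duality $L_\infty(\mu)\equiv L_1(\mu)^{\ast}$, so that the real work becomes checking that $\mu_A$ belongs to the class of measures for which this duality is an isometric isomorphism. I would begin by recording the canonical pairing: to $g\in L_\infty(\mu_A)$ associate the functional $\Phi_g:f\mapsto\int_A fg\dd\mu_A$ on $L_1(\mu_A)$. Since $\mu_A$ is a nonnegative countably additive measure, $\Phi_g$ is well defined with $\|\Phi_g\|\le\|g\|_\infty$, and $g\mapsto\Phi_g$ is linear; the content of the proposition is that it is an onto isometry. Recall that isometry (and injectivity) of this map requires only that $\mu_A$ be semifinite, whereas surjectivity is exactly the localizability of $\mu_A$; a convenient condition covering both is that $\mu_A$ be decomposable (strictly localizable). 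I would cite the standard duality theorem in this form (e.g. Fremlin, \emph{Measure Theory}, or Lacey's book). Thus the proposition reduces to the assertion that \emph{$\mu_A$ is a decomposable measure}.

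Next I would observe that $\sigma$-finiteness, the easiest form of decomposability, already settles the separable case. If the relevant part of $T$ is separable, then $A$ contains no uncountable uniformly separated family, which bounds the branching of $A$ and forces $\la_A$ to be carried by countably many arcs of finite length; moreover $\gat$ is then countable, so $\mu_A$ is $\sigma$-finite and the duality is immediate. The subtlety is that a general $\rr$-tree may branch uncountably: an uncountable ``star'' of unit segments issued from $0$ is an $\rr$-tree, and for it both $\la_A$ and the atomic part $\sum_{a\in\gat}L(a)\,\delta_a$ are genuinely not $\sigma$-finite. This is why I would argue decomposability rather than $\sigma$-finiteness in general.

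To produce a decomposition I would treat the two summands of $\mu_A=\la_A+\sum_{a\in\gat}L(a)\,\delta_a$ separately. The purely atomic part is trivially decomposable: the singletons $\{a\}$ with $a\in\gat$ each have finite mass $\mu_A(\{a\})=L(a)$ (note $\la_A(\{a\})=0$), and since $\gat$ meets every segment in a countable set we have $\la_A(\gat)=0$, so there is no double counting. For $\la_A$ on the complement $A\setminus\gat$ I would choose, by Zorn's lemma, a maximal pairwise-disjoint family of nondegenerate arcs contained in $A$; each such arc is isometric to a compact interval, hence carries, via \eqref{chvar}, a length measure that is finite on subarcs, and after subdivision I may assume every piece has finite measure. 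The verification that this family is a genuine decomposition rests on how $T$-measurability and $\la_T$ were defined: a set is measurable exactly when its pullback $\phi_{xy}^{-1}$ along every segment is Lebesgue measurable, and $\la_T$ is the supremum of $\la_R$ over finite unions of disjoint segments, so both measurability and total mass ought to decouple piece by piece.

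The main obstacle is precisely this last verification in the non-separable case. A single segment $[\,x,y\,]$ may meet uncountably many pieces of the family — in a densely and uncountably branching tree the arcs attach to $[\,x,y\,]$ at uncountably many points — so $\phi_{xy}^{-1}(E)$ is an uncountable disjoint union of Lebesgue-measurable sets, which a priori need not be Lebesgue measurable. The crux is to show that inside any fixed segment all but countably many of these intersections are Lebesgue-null, and that the residual ``thin'' set (branch points, together with any positive-length totally disconnected remnant left uncovered by the maximal family) is controlled, so that the pullbacks reassemble into a measurable set and the masses add up. Here the tree axioms — unique arcs and absence of loops — are exactly what rule out pathological overlaps and make the decomposition axioms hold; turning this picture into a clean proof that $\mu_A$ is decomposable (equivalently, that its measure algebra is Dedekind complete) is the technical heart of the argument, after which the cited duality theorem finishes the proof.
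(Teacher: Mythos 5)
Your high-level strategy --- reduce to the classical theorem that the canonical map of $L_\infty(\mu)$ into $L_1(\mu)^{*}$ is an onto isometry when $\mu$ is decomposable (strictly localizable), then verify decomposability of $\mu_A$ --- is legitimate in principle, but your text stops exactly where the proof would have to begin: you say yourself that verifying the decomposition axioms is ``the technical heart of the argument'' and you leave it open, so what you have is a plan, not a proof. Moreover, the specific decomposition you propose fails as stated. You take a Zorn-maximal pairwise disjoint family of nondegenerate arcs \emph{contained in $A$}; but a closed set $A$ may contain no nondegenerate arc at all while $\la_A$ is large --- graft uncountably many unit segments onto a common root and let $A$ meet each branch in a fat Cantor set: then $\la_A$ is not $\sigma$-finite, your family is empty, and the ``residual totally disconnected remnant'' you hope to control carries \emph{all} of the mass. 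Even when $A=T$, Zorn-maximality of a disjoint family of compact arcs only forces the uncovered set to have empty interior, not to be null, and, as you note, the points where uncountably many pieces touch a fixed segment can form a non-null set, so that sets measurable piece-by-piece need not be $T$-measurable. A decomposition that works must be adapted to the tree structure (each segment meeting only countably many pieces non-trivially), and constructing one is not appreciably easier than the proposition itself.

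The paper sidesteps localizability entirely. After splitting off the atoms via $L_1(\mu_A)\equiv L_1(\la_A)\oplus_1 \ell_1(\gat)$, it proves surjectivity of the canonical embedding directly: a functional $\varphi\in L_1(\la_A)^{*}$ restricts to each segment $[\,x\,,y\,]$, where the measure is finite and classical duality produces a density $h_{xy}$; any two such densities agree a.e.\ on overlaps, so they patch into a single $h\in L_\infty(\la_A)$ --- and since measurability in $T$ is \emph{defined} segment-by-segment, the uncountable-union obstruction you worry about never arises for $h$. Finally, because $\la_T$ is defined as a supremum over finite unions of disjoint segments, functions supported on such unions are dense in $L_1(\la_A)$, which upgrades the representation $\varphi(f)=\int f\hs h \dd \la_A$ from that dense subspace to all of $L_1(\la_A)$. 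One can view this segment-wise patching as a hands-on proof of the localizability your approach would need; but as written, your argument has a genuine, self-acknowledged hole at its decisive step, and its proposed decomposition is refuted by the fat-Cantor examples above.
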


When $T$ is separable, this proposition is a direct consequence of a classical theorem.

\begin{proof}
We have $L_1(\mu_A) \equiv L_1(\la_A) \oplus_1 \ell_1(\gat)$ and $L_\infty(\mu_A) \equiv L_\infty(\la_A) \oplus_\infty \ell_\infty(\gat)$: all we need to prove is $L_\infty(\la_A) \equiv {L_1(\la_A)}^{*}$. We show that the canonical embedding of $L_\infty (\la_A)$ into $L_1(\la_A)^{*}$ is onto. Let $\varphi$ be a bounded linear functional on $L_1(\la_A)$: for any $x$ and $y$ in $T$, $\varphi$ defines by restriction a bounded linear functional on $L_1(A \inte [ \, x \, , y \, ])$, therefore there exists $h_{xy}$ in $L_\infty (A \inte [ \, x \, , y \, ])$ such that
$$
\varphi(f)=\int_{T} f \hs h_{xy} \dd \la_A
$$
for any $f$ in $L_1(A \inte [ \, x \, , y \, ])$. Any two functions $h_{xy}$ and $h_{x'y'}$ coincide almost everywhere on $A \inte [ \, x \, , y \, ] \inte [ \, x' \, , y' \, ]$, and there exists $h$ in $L_\infty(\la_A)$ such that
$$
\varphi(f)=\int_{T} f \hs h \dd \la_T
$$
for any $f \in \Lun(\la_A)$ with support included in a finite union of intervals. Now such functions form a dense subspace of $L_1(\la_A)$ and the equality above is therefore valid for any $f$ in $L_1(\la_A)$.
\end{proof}

\noi We define differentiation on a pointed $\rr$-tree (also called a rooted real tree) in the following way.

\begin{defn}
\label{defdif}
Let $T$ be a pointed $\rr$-tree, $A \inc T$ and $f : A \to \rr$. If $a \in A$, we denote $\at$ the unique point in $[\, 0 \, , a \, ]$ satisfying $d(a \, , \at)=L(a)$. When
$$
\lim_{\substack{x \to \at \\ x \hs \in \hs [ 0 \hs , \hs a [}} \dfrac{f(a)-f(x)}{d(x \hs , a)}
$$
exists, we say that $f$ is differentiable at $a$, and we put $f'(a)$ to be the value of this limit.
\end{defn}

When $A$ is closed, a mapping  $f : A \to \rr$ is always differentiable on root-isolated points of $A$, $f'(a)$ being equal in this case to $(f(a)-f(\at))/L(a)$. When $A=T$, the limit corresponds to the left-derivative of $f_{a}=f \circ \phi_{a}$ at $d(0 \hs , a)$, where $\phi_{a}$ is the isometry associated to points $0$ and $a$ as in definition \ref{defrt}; if $f$ is Lipschitz on $T$, the function $f_{x}=f \circ \phi_{x}$ is differentiable almost everywhere on $[\, 0 \, , d(0,x) \, ]$ for any $x$ in $T$, therefore $f$ is differentiable almost everywhere on $T$. Moreover, the change of variable formula \ref{chvar} yields
$$
f(x)-f(0)=\int_{[ 0 , x ]} f' \dd \la_T
\quad .
$$

\section{Lipschitz-free spaces of metric trees}
In this section we compute the Lipschitz-free spaces of a certain class of subsets of $\rr$-trees. For this, we integrate bounded measurable functions defined on a closed subset $A$ with respect to the measure $\mu_A$ introduced in definition \ref{defmu}. We derive the computation of Lipschitz-free spaces of subsets of the real line.

\begin{defn}
\label{defbrp}
A point $t$ of an $\rr$-tree $T$ is said to be a branching point of $T$ if $T \setminus \{t\}$ possesses at least three connected components. We denote $\Br(T)$ the set of branching points of $T$.
\end{defn}

\begin{thm}
\label{lipa}
Let $T$ be an $\rr$-tree and $A$ be a subset of $T$ such that $\Br(T) \inc \bar{A}$. Then $\fz(A)$ is isometric to $\Lun(\mu_{\bar{A}})$.
\end{thm}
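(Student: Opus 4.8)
The plan is to prove the theorem by duality, exhibiting a weak*-weak* continuous surjective linear isometry between the dual spaces $\lipz(A) = \fz(A)^{*}$ and $\Linf(\mu_{\bar A}) = \Lun(\mu_{\bar A})^{*}$ (the latter identification being Proposition \ref{dual}), and then predualizing. Since every Lipschitz function on $A$ extends uniquely to $\bar A$ with the same Lipschitz constant, and this extension is a weak*-weak* homeomorphism for the topologies of pointwise convergence, we have $\lipz(A) \equiv \lipz(\bar A)$ and hence $\fz(A) \equiv \fz(\bar A)$; we may therefore assume that $A$ is closed and work with $\mu_A$. The candidate isometry is differentiation $D : f \mapsto f'$, which is well defined into $\Linf(\mu_A)$: on the root-isolated points $f'$ always exists, and on the remaining part $f$ (extended to a Lipschitz function on $T$) is differentiable $\la_A$-almost everywhere.

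The inequality $\|f'\|_\infty \le \|f\|$ is immediate, since $|f'(a)| = |f(a)-f(\at)|/d(a,\at) \le \|f\|$ at root-isolated points and $|f'(a)| \le \|f\|$ wherever the defining limit exists. The heart of the proof is the reverse inequality, for which I would first establish the reconstruction formula
$$ f(a)-f(0) = \int_{A \inte [\, 0 \, , a \,]} f' \dd \mu_A \qquad (a \in A), $$
obtained by interpolating $f$ linearly across the gaps of $A$ along $[\, 0 \, , a \,]$, applying the change of variable formula \ref{chvar} and the fundamental theorem of calculus to the interpolant, and observing that the mass $L(b)$ that $\mu_A$ places at a root-isolated point $b$ exactly accounts for the integral of $f'$ over the gap it closes. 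The same bookkeeping yields the key measure identity $\mu_A(A \inte (\, w \, , x \,]) = d(w,x)$ whenever $w \in A$ lies on $[\, 0 \, , x \,]$: the length measure records the portion of $A$ present and the Dirac masses fill in precisely the gaps, no gap reaching below $w$ because $w \in A$.

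To compare $f(x)$ and $f(y)$ for $x,y \in A$, let $w$ be their median, the point common to $[\,0\,,x\,]$, $[\,0\,,y\,]$ and $[\,x\,,y\,]$. If $w \notin \{0,x,y\}$ then three segments emanate from $w$, so $w \in \Br(T) \inc \bar A = A$; otherwise $w \in A$ trivially. This is exactly where the hypothesis $\Br(T) \inc \bar A$ enters, and it is the main obstacle: without it the gaps adjacent to the meeting point would not be charged correctly and the measure would undercount the distance. Subtracting the two reconstruction formulas and cancelling the common integral over $A \inte [\,0\,,w\,]$ gives $f(x)-f(y) = \int_{A \inte (\,w\,,x\,]} f' \dd\mu_A - \int_{A \inte (\,w\,,y\,]} f' \dd\mu_A$, whence $|f(x)-f(y)| \le \|f'\|_\infty \, (d(w,x)+d(w,y)) = \|f'\|_\infty \, d(x,y)$ by the measure identity and additivity of the metric along $[\,x\,,y\,]$. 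Thus $\|f\| = \|f'\|_\infty$ and $D$ is an isometry.

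It remains to see that $D$ is onto and weak*-weak* continuous. Surjectivity follows by running the argument backwards: given $g \in \Linf(\mu_A)$, the function $f(a) = \int_{A \inte [\,0\,,a\,]} g \dd\mu_A$ is Lipschitz by the measure identity and satisfies $f' = g$ (on root-isolated points directly, and $\la_A$-almost everywhere by Lebesgue differentiation). For weak*-continuity it suffices, $D$ being bounded, to test against a dense subset of $\Lun(\mu_A)$: for the cumulative indicators one has $\int f' \, \ind_{A \inte [\,0\,,a\,]} \dd\mu_A = f(a)-f(0) = \langle \delta_a, f \rangle$, which is weak*-continuous, and the closed linear span of these indicators is all of $\Lun(\mu_A)$. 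Consequently $D = S^{*}$ for a bounded operator $S : \Lun(\mu_A) \to \fz(A)$, and since $D$ is an onto isometry, so is $S$; this gives $\fz(A) \equiv \Lun(\mu_A) = \Lun(\mu_{\bar A})$. The remaining points — the almost-everywhere differentiability of Lipschitz functions along segments, the density of the cumulative indicators (reducing to a separable subtree by $\sigma$-finiteness), and the standard duality passing an onto weak* isometry to its pre-adjoint — are routine.
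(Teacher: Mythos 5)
Your proof is correct and takes essentially the same route as the paper's: the paper works with the inverse of your differentiation operator $D$, namely the integration map $\Phi(g)(a)=\int_{[\,0\,,a\,]} g \dd \mu_A$, proves it is an isometry by the same median-point argument (the point $x$ with $[\,0\,,a\,]\cap[\,0\,,b\,]=[\,0\,,x\,]$ lies in $A$ because $\Br(T)\inc \bar{A}$, with the identity $\mu_A(A\cap(\,w\,,x\,])=d(w,x)$ implicit there and explicit in your write-up), obtains surjectivity from the same affine-extension reconstruction formula $f(a)=\int_{[\,0\,,a\,]} f' \dd \mu_A$, and then predualizes via Proposition \ref{dual} exactly as you do. The only differences are cosmetic — you orient the isometry as differentiation rather than integration, and you spell out the Lebesgue-differentiation and density-of-cumulative-indicators details that the paper leaves implicit.
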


\begin{proof}
As $\fz(A)$ is isometric to $\fz(\bar{A})$, we may assume that $A$ is closed. We choose a point in $A$ as origin for $T$ and we prove that $\lipz(A)$ is isometric to $L_\infty (\mu_A)$. For this, we define a linear map $\Phi$ of $L_\infty (\mu_A)$ into $\lipz(A)$ by putting
$$
\Phi(g)(a)=
\int_{[ 0 , a ]} g \dd \mu_A
$$
The norm of a Lipschitz function $f$ defined on $A$ may be computed on intervals containing $0$: if $a$ and $b$ are in $A$, there exists $x \in T$ such that $[\, 0 \, , a \, ] \cap [\, 0 \, , b \, ]=[\, 0 \, ,  x \, ]$; as $A$ contains all the branching points of $T$, we know that $x \in A$. One of the quantities $|f(a)-f(x)| / d(x \, , a)$ and $|f(b)-f(x)| / d(x \, , b)$ must be greater or equal to $|f(b)-f(a)| / d(a \, , b)$ as $x$ belongs to the interval $[\, a \, , b \, ]$. We deduce that $\Phi$ is an isometry.\\
We now prove that $\Phi$ is onto. Let $f$ be in $\text{Lip}_0(A)$; we extend $f$ to a Lipschitz mapping $\ft$ defined on $T$ and affine on each interval contained in the complement of $A$. The function $\ft$ is differentiable almost everywhere on $T$ (in the sense of definition \ref{defdif}), therefore $f$ is differentiable $\mu_A$-almost everywhere. For any $a \in A$ we have
$$
f(a)=\int_{[0,a]} {\ft}^{\, '} \dd \la_T
=\int_{[0 , a]} f' \dd \la_A+\int_{[0 , a] \setminus A} {\ft}^{\, '} \dd \la_T
\quad .
$$
The set $B_a=[ \, 0 \, , a \, ] \setminus A$ is a union of open intervals:  
$$B_a=\bigcup_{i \in I} \ ] \, \at_i \, , a_i \, [$$
so
$$
\int_{B_a} {\ft}^{\, '} \dd \la_T=\sum_{i \in I} \, \dfrac{f(a_i)-f(\at_i)}{d(\at_i \hs , a_i)} \times L(a_i)
=\sum_{i \in I} L(a_i) f'(a_i)
=\int_{[0 , a]} f' \dd \nu_A
$$
where $\nu_A= \sum_{u \in A} L(u) \hs \delta_{u}$.\\
Hence
$$
f(a)=\int_{[0 , a]} f' \dd \mu_A
$$
for any $a$ in $A$ and we get $f=\Phi(f')$.\\
Finally, we know from proposition \ref{dual} that $L_\infty (\mu_A)$ is isometrically isomorphic to the dual space of $\Lun(\mu_{A})$. The linear isometry $\Phi$ being weak*-weak* continuous, it is the transpose of a linear isometry between $\fz(A)$ and $\Lun(\mu_A)$.
\end{proof}

\begin{cor}
\label{lipt}
If $T$ is an $\rr$-tree, $\fz(T)$ is isometric to $\Lun(T)$.
\end{cor}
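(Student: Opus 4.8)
The plan is to read off the corollary as the special case $A = T$ of Theorem~\ref{lipa}. First I would check that the hypothesis of that theorem is automatically satisfied here: since $T$ is closed in itself we have $\bar{A} = \bar{T} = T$, and the inclusion $\Br(T) \inc T$ is trivial because every branching point is by definition a point of $T$. Theorem~\ref{lipa} therefore applies and gives that $\fz(T)$ is isometric to $\Lun(\mu_T)$.

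The only real work is to identify $\mu_T$ with the length measure $\la_T$, that is, to show that the atomic part of $\mu_T$ vanishes. Recall from Definition~\ref{defmu} that
$$\mu_T = \la_T + \sum_{a \in T} L(a) \, \delta_a, \qquad L(a) = \inf_{x \in T \inte [\, 0 \, , a \, [} d(a, x).$$
For any $a \in T$ distinct from the origin, the segment $[\, 0 \, , a \, ]$ is isometric to the compact interval $[\, 0 \, , d(0,a)\, ]$, so the half-open segment $[\, 0 \, , a \, [$ contains points arbitrarily close to $a$; hence $L(a) = 0$. The origin is not root-isolated, so $L(0) = 0$ as well. Consequently $\sum_{a \in T} L(a)\, \delta_a$ is identically zero and $\mu_T = \la_T$. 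Said differently, the whole tree has no gaps, so it has no root-isolated points and $\gat$ is empty.

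Combining the two steps yields $\fz(T) \equiv \Lun(\mu_T) = \Lun(\la_T) = \Lun(T)$, which is the assertion. I do not anticipate any genuine obstacle: the substance of the corollary is already contained in Theorem~\ref{lipa}, and the sole verification required is the immediate observation that passing to the entire tree eliminates every root-isolated point and collapses $\mu_T$ onto the length measure $\la_T$.
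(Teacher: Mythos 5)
Your proposal is correct and is exactly the paper's (implicit) argument: the corollary is the case $A=T$ of Theorem~\ref{lipa}, and since every point $a \neq 0$ of $T$ is a limit of points of $[\, 0 \, , a \, [$, the tree has no root-isolated points, so $\mu_T=\la_T$ and $\Lun(\mu_T)=\Lun(T)$. Nothing further is required.
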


The isometry associates to each function $h$ of $\Lun(T)$ the element $\alp$ of the Lipschitz-free space defined by $\alp(f)=\int_T f' h \dd \la_T$ i.e. the opposite of the derivative of $h$ in a distribution sense. Conversely, if $\alp$ is a measure with finite support, the corresponding $\Lun$-function is defined by $g(t)=\alp(C_t)$ where $C_t=\{x \in T \ / \ t \in [ \, 0 \, , x \,] \}$.\\
For a separable $\rr$-tree, we get $\fz(T) \equiv \Lun$. This shows that metric spaces having isometric Lipschitz-free spaces need not be homeomorphic (see \cite{DutFer2006} for examples of non-isomorphic Banach spaces having isomorphic Lipschitz-free spaces).

\begin{cor}
\label{lipas}
Let $T$ be a separable $\rr$-tree, and $A$ an infinite subset of $T$ such that $\Br(T) \inc \bar{A}$. If $\bar{A}$ has length measure $0$, then $\fz(A)$ is isometric to $\ell_1$. If $\bar{A}$ has positive length measure, $\fz(A)$ is isomorphic to $\Lun$.
\end{cor}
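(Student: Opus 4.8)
The plan is to read everything off Theorem \ref{lipa} together with the measure decomposition already used for Proposition \ref{dual}. Since $\fz(A) \equiv \fz(\bar A)$ I would first replace $A$ by its closure and apply Theorem \ref{lipa}, giving $\fz(A) \equiv \Lun(\mu_A)$. Splitting $\mu_A$ into its continuous and purely atomic parts (the atoms being exactly the root-isolated points $a$, each carrying mass $L(a)>0$) yields the isometric identification
$$\Lun(\mu_A) \equiv \Lun(\la_A) \oplus_{1} \ell_1(\gat)$$
recorded at the start of the proof of Proposition \ref{dual}. The dichotomy of the corollary is precisely whether the first summand is trivial.

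In the zero-length case $\la_A = 0$, so $\fz(A) \equiv \ell_1(\gat)$ and everything reduces to showing that $\gat$ is countably infinite. For countability I would attach to each root-isolated point $a$ the open arc $]\at , a[$: it has length $L(a)>0$, is disjoint from $A$, and contains no branching point (one would lie in $\Br(T) \subseteq \bar A = A$, contradicting disjointness from $A$), so it is open in $T$; a short tree argument — again invoking $\Br(T) \subseteq \bar A$ at the meeting point of two branches — shows these arcs are pairwise disjoint, whence there are at most countably many in a separable space. For infiniteness I would transport the hypothesis through the functor: an infinite metric space has infinite-dimensional Lipschitz-free space (the evaluations at finitely many distinct points are linearly independent), so $\ell_1(\gat)$ is infinite-dimensional and $\gat$ is infinite. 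Thus $\gat$ is countably infinite and $\ell_1(\gat) \equiv \ell_1$.

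In the positive-length case $\la_A$ is a nonzero, non-atomic, $\sigma$-finite measure, and $\fz(A)$ is separable because $A$ is; hence $\Lun(\la_A)$ is a separable nonzero non-atomic $L_1$-space and the classification of such spaces gives $\Lun(\la_A) \equiv \Lun = L_1[0,1]$. So $\fz(A) \cong L_1 \oplus \ell_1(\gat)$, where $\ell_1(\gat)$ is $0$, some $\ell_1^n$, or $\ell_1$. To conclude I would use that $\ell_1$ is complemented in $L_1$ (as the range of a conditional expectation onto disjointly supported indicators, say $L_1 \cong \ell_1 \oplus W$) and a Pe{\l}czy{\'n}ski-type decomposition: $L_1 \oplus \ell_1 \cong \ell_1 \oplus W \oplus \ell_1 \cong (\ell_1 \oplus \ell_1) \oplus W \cong \ell_1 \oplus W \cong L_1$, using $\ell_1 \oplus \ell_1 \cong \ell_1$; the finite and empty subcases are easier. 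In every case $\fz(A)$ is isomorphic (not necessarily isometrically) to $L_1$.

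The step I expect to be the real obstacle is infiniteness of $\gat$ in the zero-length case: countability is a soft consequence of separability, but infiniteness genuinely requires the hypothesis that $A$ is infinite, and the cleanest way I see to use it is to push it through the free-space functor into infinite-dimensionality of $\ell_1(\gat)$, thereby ruling out pathologies such as an infinite closed set of zero length with only finitely many root-isolated points. In the positive-length case the only non-formal ingredient is the isomorphism $L_1 \oplus \ell_1 \cong L_1$, which is standard once $\ell_1$ is known to be complemented in $L_1$.
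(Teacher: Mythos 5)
Your proof is correct and follows essentially the same route as the paper's own (very terse) argument: apply Theorem \ref{lipa}, split $\Lun(\mu_{\bar{A}}) \equiv \Lun(\la_{\bar{A}}) \oplus_1 \ell_1(\gat)$ as in the proof of Proposition \ref{dual}, deduce countability of the root-isolated points from separability of $T$, and absorb the $\ell_1$-summand into $\Lun$ by the Pe{\l}czy{\'n}ski decomposition method. The details you supply beyond the paper's three-line proof --- the pairwise disjoint open arcs $]\at \, , a[$ for countability, the infinite-dimensionality of $\fz(A)$ (via linear independence of the $\delta_x$) to rule out $\gat$ being finite in the zero-length case, which is precisely where the hypothesis that $A$ is infinite enters, and the explicit complemented-copy absorption giving $\Lun \oplus \ell_1 \simeq \Lun$ --- are faithful elaborations of steps the paper leaves implicit, and they are all sound.
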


\begin{proof}
As $T$ is separable, the set of root-isolated points of $\bar{A}$ is at most countable. When $\la(\bar{A})>0$, $\fz(A)$ is isometric to one of the spaces  $\Lun$, $\dis \Lun \oplus_{1}  {\ell}_1^{n}$, $\Lun \oplus_{1}  \ell_1$:
the fact that all these spaces are isomorphic to $\Lun$ results from the Pe{\l}czy{\'n}ski decomposition method.
\end{proof}

This result allows to compute the Lipschitz-free space of any subset of $\rr$. As an application, we get that infinite discrete subsets of $\rr$ have a Lipschitz-free space isometric to $\ell_1$. Also, $\fz(K_3) \equiv \ell_1$ where $K_3$ is the usual Cantor set; the Lipschitz-free space of a Cantor set of positive measure is isometric to $\Lun \oplus_{1}  \ell_1$: this provides examples of homeomorphic metric spaces having non-isomorphic Lipschitz-free spaces.\\

Finite subsets of $\rr$-trees which contain the branching points usually appear as weighted trees. 

\begin{defn}
\label{defwt}
A weighted tree is a finite connected graph with no cycle, endowed with an edge weighted path metric.
\end{defn}

\begin{cor}
\label{lipwt}
If $T$ is a weighted tree, then $\fz(T)$ is isometric to $\dis {\ell}_1^{n}$ where $n=\text{card}(T)-1$.
\end{cor}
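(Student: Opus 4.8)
The plan is to realize the weighted tree as a finite closed subset of an $\rr$-tree and then to apply Theorem \ref{lipa}. First I would form the geometric realization $\widehat{T}$ of $T$: each edge of weight $w$ is replaced by a segment isometric to $[\, 0 \, , w \, ]$, and these segments are glued at the shared vertices. Since $T$ has no cycle, between any two points of $\widehat{T}$ there is a unique arc and it is isometric to an interval, so $\widehat{T}$ is an $\rr$-tree; moreover the vertex set $T$ embeds isometrically, because the distance in $\widehat{T}$ between two vertices is the length of the unique arc joining them, which is exactly the weighted path metric. Write $A$ for the (finite, hence closed) image of $T$ in $\widehat{T}$.

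Next I would verify the hypothesis $\Br(\widehat{T}) \inc \bar{A}=A$. Removing an interior point of an edge leaves exactly two components, so such a point is not a branching point; the branching points of $\widehat{T}$ are thus among the vertices, a vertex being branching precisely when its degree is at least three. In every case $\Br(\widehat{T}) \inc A$, so Theorem \ref{lipa} applies and gives $\fz(T)=\fz(A) \equiv \Lun(\mu_A)$.

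It then remains to identify $\mu_A$. As $A$ is finite, $\phi_{xy}^{-1}(A)$ is finite for all $x,y$, whence $\la_A=0$ and the length part of $\mu_A$ vanishes; thus $\mu_A=\sum_{a \in A} L(a)\hs \delta_a$ is purely atomic. Choosing a vertex as origin $0$, for every other vertex $a$ the point $\at$ realizing $L(a)$ is the parent vertex, so $L(a)$ equals the (positive) weight of the edge from $\at$ to $a$, while the origin carries no mass. Hence $\mu_A$ has exactly $\text{card}(T)-1$ atoms, each of positive mass.

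Finally, for a purely atomic measure $\sum_{k=1}^{n} c_k \hs \delta_{x_k}$ with every $c_k>0$, the map $g \mapsto (c_k \hs g(x_k))_{k=1}^{n}$ is a linear isometry of $\Lun$ onto ${\ell}_1^{n}$; taking $n=\text{card}(T)-1$ yields $\fz(T) \equiv {\ell}_1^{n}$. Since the heavy lifting is done by Theorem \ref{lipa}, the proof is essentially an application, and the one point requiring care is the atom count: the origin must contribute no mass, while positivity of the edge weights forces a positive atom at each of the remaining $\text{card}(T)-1$ vertices. This is precisely what makes the dimension $\text{card}(T)-1$ rather than $\text{card}(T)$.
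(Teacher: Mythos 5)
Your proof is correct and follows exactly the route the paper intends (the corollary is stated without explicit proof as a direct application of Theorem \ref{lipa}): realize the weighted tree as a finite subset $A$ of its geometric realization, note $\Br(\widehat{T}) \inc A$, and identify $\Lun(\mu_A)$ with $\ell_1^{n}$ since $\la_A=0$ and $\mu_A$ is purely atomic with positive mass $L(a)$ (the parent-edge weight) at each non-root vertex. Your attention to the atom count --- no mass at the origin, hence dimension $\text{card}(T)-1$ --- is precisely the point that needed checking.
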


Embedding a finite metric space into a weighted tree leads to an easy computation of the norm of its Lipschitz-free space (this is not always possible as we shall see in the following section). Let us illustrate this remark with two obvious examples. A metric space  $M=\{0 \, , 1 \, , 2 \}$ of cardinal $3$ embeds into a four points weighted tree with edge lengths
$$
\lambda_0=\frac{1}{2} \  [ \, d_{01}+d_{02}-d_{12} \, ] \ , \
\lambda_1=\frac{1}{2} \ [ \, d_{01}+d_{12}-d_{02} \, ] \ , \
\lambda_2=\frac{1}{2} \ [ \, d_{02}+d_{12}-d_{01} \, ] \ .
$$
This embedding yields an isometry between $\fz(M)$ and a hyperplane of $\ell_1^3$ and we obtain
$$
\| \alpha_{1} \delta_{1} + \alpha_{2} \delta_{2} \| = \lambda_0 \, |\alp_1+\alp_2| + \lambda_1 \, | \alp_1|+\lambda_2 \, |\alp_2| \quad .
$$
If $M$ is an $(n+1)$-points metric space equipped with the discrete distance ($d(i \, , j)=1$ if $i \neq j$), we obtain in the same way
$$
\left\| \, \sum_{i=1}^{n} \alpha_{i} \delta_{i} \, \right\|
= \frac{1}{2} \, \left| \, \sum_{i=1}^{n} \alpha_{i} \, \right|
+\frac{1}{2} \, \sum_{i=1}^{n}|\alpha_{i}| \ \, .
$$
$\quad$
 
\section{Characterization of tree metrics}
In this section we show that subsets of $\rr$-trees are the only metric spaces with a Lipschitz-free space isometric to a subspace of an $\Lun$-space. We use the fact that a metric space isometrically embeds into an $\rr$-tree if and only if it is $0$-hyperbolic, that is to say satisfies the following property.

\begin{defn}
A metric space is said to satisfy the four-point condition if
$$d(a,b)+d(c,d) \, \leq \, \max \, (\, d(a,c)+d(b,d) \, , \, d(a,d)+d(b,c) \, )$$
whenever $a$, $b$, $c$ and $d$ are in $M$.
\end{defn}

Equivalently, the maximum of the three sums $d(a,b)+d(c,d)$, $d(a,c)+d(b,d)$ and $d(a,d)+d(b,c)$ is always attained at least twice. It is fairly straightfoward to check that metrics induced by $\rr$-trees indeed satisfy this condition; 
for a proof of the converse, see \cite{Eva2008}, Chap. 3.\\

The following theorem relates the aforementioned characterization of $\rr$-trees with Lipschitz-free spaces.

\begin{thm}
\label{main}
Let $M$ be a metric space. The following assertions are equivalent.
\textup{(1)} $\fz(M)$ is isometric to a subspace of an $L_1$-space.\\
\textup{(2)} $M$ satisfies the four-point condition.\\
\textup{(3)} $M$ isometrically embeds into an $\rr$-tree.
\end{thm}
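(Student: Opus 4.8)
The plan is to prove the cycle $(3)\Rightarrow(1)\Rightarrow(2)\Rightarrow(3)$, using that the implication $(2)\Rightarrow(3)$ is the classical characterization of $\rr$-trees by the four-point condition recalled above (\cite{Eva2008}); thus the genuinely new content lies in $(3)\Rightarrow(1)$ and $(1)\Rightarrow(2)$. For $(3)\Rightarrow(1)$, suppose $M$ embeds isometrically into an $\rr$-tree $T$. The functorial property recalled in the introduction shows that $\fz(M)$ is an isometric subspace of $\fz(T)$, and Corollary \ref{lipt} identifies $\fz(T)$ with $\Lun(\la_T)$, which is an $L_1$-space. Hence $\fz(M)$ is isometric to a subspace of an $L_1$-space.

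The heart of the argument is $(1)\Rightarrow(2)$. Here I would exploit an inequality that holds in every $L_1$-space but not in a general Banach space, namely Hlawka's inequality
$$\|f+g\|+\|g+h\|+\|h+f\|\ \leq\ \|f\|+\|g\|+\|h\|+\|f+g+h\|,$$
obtained by integrating the elementary scalar inequality $|u+v|+|v+w|+|w+u|\leq|u|+|v|+|w|+|u+v+w|$ over the underlying measure space. If $\fz(M)$ embeds isometrically into $L_1$, this inequality is inherited by $\fz(M)$. Given four points $a,b,c,d\in M$, I would apply it to the three molecules $f=\delta_a-\delta_c$, $g=\delta_c-\delta_b$ and $h=\delta_b-\delta_d$.

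It then remains to evaluate the seven norms. Six of the vectors are elementary molecules, so $\|f\|=d(a,c)$, $\|g\|=d(b,c)$, $\|h\|=d(b,d)$, $\|f+g\|=\|\delta_a-\delta_b\|=d(a,b)$, $\|g+h\|=\|\delta_c-\delta_d\|=d(c,d)$ and $\|f+g+h\|=\|\delta_a-\delta_d\|=d(a,d)$. The only genuine two-point transport is $f+h=\delta_a+\delta_b-\delta_c-\delta_d$; by Kantorovich--Rubinstein duality its norm equals the optimal cost of transporting $\{a,b\}$ onto $\{c,d\}$, which over the Birkhoff polytope of $2\times2$ doubly stochastic matrices is attained at a permutation, giving $\min\!\big(d(a,c)+d(b,d),\,d(a,d)+d(b,c)\big)$. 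Writing $s_2=d(a,c)+d(b,d)$ and $s_3=d(a,d)+d(b,c)$, substitution into Hlawka's inequality yields $d(a,b)+d(c,d)+\min(s_2,s_3)\leq s_2+s_3$; since $s_2+s_3=\max(s_2,s_3)+\min(s_2,s_3)$, the two copies of $\min(s_2,s_3)$ cancel and leave exactly $d(a,b)+d(c,d)\leq\max\!\big(d(a,c)+d(b,d),\,d(a,d)+d(b,c)\big)$, which is the four-point condition for $a,b,c,d$.

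I expect the main obstacle to be locating this clean route rather than executing it. The a priori natural criterion — that a subspace of $L_1$ has negative definite norm, equivalently that the dual unit ball is a zonoid — is awkward to falsify by hand on a four-point violation, since the balanced test vectors built from the four Dirac masses only recover negative definiteness of $d$ itself, which is strictly weaker than the four-point condition. Hlawka's inequality instead converts \emph{directly} into the desired inequality, so the one computational point requiring care is the value of the single transport norm $\|\delta_a+\delta_b-\delta_c-\delta_d\|$: both the trivial upper bound $\leq\min(s_2,s_3)$ (test against $1$-Lipschitz functions) and the matching lower bound supplied by transport duality are needed, because that term sits on the smaller side of Hlawka's inequality.
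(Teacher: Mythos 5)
Your proof is correct, and for the key implication (1)$\Rightarrow$(2) it takes a genuinely different route from the paper. The paper argues through convex geometry: the unit ball of $\lipz(\{a_0,a_1,a_2,a_3\})$ is a polytope in $\rr^3$ which, being the dual ball of a three-dimensional subspace of $\Lun$, must be a zonotope (a projection of a cube), so all of its two-dimensional faces are centrally symmetric; imposing this on the face lying in the plane $z=d_{03}$ and running through the nine ways such a planar set can be empty or centrally symmetric produces six metric conditions, each of which implies the four-point inequality. You instead let $\fz(M)$ inherit Hlawka's inequality from $L_1(\mu)$ (by integrating the scalar Hornich--Hlawka inequality) and apply it to $f=\delta_a-\delta_c$, $g=\delta_c-\delta_b$, $h=\delta_b-\delta_d$; all seven norms are elementary molecule norms except $\|f+h\|=\|\delta_a+\delta_b-\delta_c-\delta_d\|$, whose value $\min(s_2,s_3)$ you correctly obtain from Kantorovich--Rubinstein duality plus Birkhoff. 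One small precision: since $\|f+h\|$ sits on the left-hand side of Hlawka's inequality, only the lower bound $\|f+h\|\geq\min(s_2,s_3)$ (the duality direction, an instance of finite LP duality) is actually needed; the upper bound merely confirms equality, so your remark that both bounds are required is a harmless overstatement. Your other two implications coincide with the paper's: (3)$\Rightarrow$(1) via Corollary \ref{lipt} and functoriality, and (2)$\Rightarrow$(3) cited to \cite{Eva2008}. It is worth noting that the paper's remark following the theorem alludes to Hlawka's inequality as the three-vector characterization of $L_1$-embeddability for polyhedral spaces, so your argument makes explicit exactly the mechanism that remark gestures at, whereas the written proof in the paper goes through zonoids. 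As for what each approach buys: the zonotope computation yields a complete catalogue of the possible face geometries (segments, rectangles, parallelograms, symmetric hexagons) and hence finer structural information about $\fz$ of four-point spaces, while your route is shorter, free of case analysis, and reduces the whole implication to one scalar inequality plus a single four-point transport problem.
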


\begin{proof}
\textup{(1)} $\Rightarrow$ \textup{(2)} Suppose $M$ is a metric space with a Lipschitz-free space isometric to a subspace of an $\Lun$-space, and consider four points in $M$ denoted $a_0$, $a_1$, $a_2$ and $a_3$. We will henceforth write $d_{ij}$ in place of $d(a_i \, , a_j)$. The unit ball of $\lipz(\{a_0 , a_1, a_2, a_3\})$ is isometric to the convex set of $\rr^3$ defined by the conditions
$$ \ |x| \leq d_{01}  \hspace{.2cm} ;  \hspace{.2cm} |y| \leq d_{02}  \hspace{.2cm} ;  \hspace{.2cm} |z| \leq d_{03}  \hspace{.2cm} ;  \hspace{.2cm} |y-x| \leq d_{12}
 \hspace{.2cm} ;  \hspace{.2cm} |z-x| \leq d_{13}  \hspace{.2cm} ;  \hspace{.2cm} |z-y| \leq d_{23}  \hspace{.2cm} .$$
Since $\fz(\{a_0 , a_1, a_2, a_3\})$ is isometric to a subspace of $\Lun$, this polytope is the projection of a cube and therefore all its faces admit a center of symmetry. The face obtained when making the third inequality to bind is characterized by
$$
\begin{array}{rcccl}
d_{03}-d_{13} & \leq & x & \leq & d_{01} \\
d_{03}-d_{23} & \leq & y & \leq & d_{02} \\
x-d_{12}   & \leq & y & \leq & x+d_{12}
\end{array}
$$
in the affine plane of equation $z = d_{03}$. \\
For any real numbers $a$, $b$, $c$, $d$, $e$, $f$, the planar convex set defined by
$$
\begin{array}{rcccl}
a & \leq & x & \leq & b \\
c & \leq & y & \leq & d \\
x+e   & \leq & y & \leq & x+f
\end{array}
$$
is either empty or centrally symmetric if and only if one of the following nine conditions holds.

$$
\begin{array}{ccccc}
b \ \leq \ a
&&
d \ \leq \ c
&&
f \ \leq \ e
\\
\\
d \ \leq \ a+e
&&
b+f \ \leq \ c
&&
a+b+e+f \ = \ c+d\\
\\
\left\lbrace
\begin{array}{rcl}
d & \leq & a+f \\
b+e & \leq & c
\end{array}
\right.
&&
\left\lbrace
\begin{array}{rcl}
b+f & \leq & d \\
c & \leq & a+e
\end{array}
\right.
&&
\left\lbrace
\begin{array}{rcl}
a+f & \leq & c\\
d & \leq & b+e
\end{array}
\right.
\end{array}
$$
Those different possibilities encompass segments, rectangles, parallelograms, and symmetric hexagons. Applying this to $a=d_{03}-d_{13}$, $b=d_{01}$, $c=d_{03}-d_{23}$, $d=d_{02}$, $e=-d_{12}$ and $f=d_{12}$, yields that one of the following conditions holds.

\begin{equation}
\label{cd1}
d_{03} \ \ = \ \ d_{01}+d_{13}
\end{equation}
\begin{equation}
\label{cd2}
d_{03} \ \ = \ \ d_{02}+d_{23}
\end{equation}
\begin{equation}
\label{cd3}
\left\lbrace
\begin{array}{rcl}
d_{01} + d_{23} & \leq &  d_{03}+d_{12}\\
d_{02}+d_{13} & \leq & d_{03}+d_{12}
\end{array}
\right.
\end{equation}
\begin{equation}
\label{cd4}
\left\lbrace
\begin{array}{rcl}
d_{02} & \ \ = \ \ & d_{01}+d_{12}\\
d_{23} & \ \ = \ \ & d_{12}+d_{13}
\end{array}
\right.
\end{equation}
\begin{equation}
\label{cd5}
\left\lbrace
\begin{array}{rcl}
d_{01} & \ \ = \ \ & d_{02}+d_{12}\\
d_{13} & \ \ = \ \ & d_{12}+d_{23}
\end{array}
\right.
\end{equation}
\begin{equation}
\label{cd6}
d_{01}+d_{23} \ \ = \ \ d_{02}+d_{13}
\end{equation}
\\
Both (\ref{cd1}) and (\ref{cd2}) imply (\ref{cd3}). Also, each of (\ref{cd4}) and (\ref{cd5}) imply (\ref{cd6}), which leaves us with

\begin{equation}
\left\lbrace
\begin{array}{rcl}
d_{01} + d_{23} & \leq &  d_{03}+d_{12}\\
d_{02}+d_{13} & \leq & d_{03}+d_{12}
\end{array}
\right.
\nonumber
\end{equation}

\begin{equation}
d_{01}+d_{23} \ \ = \ \ d_{02}+d_{13}
\nonumber
\end{equation}
\\
either condition implying $d_{01}+d_{23} \leq  \max \ ( \, d_{02}+d_{13} \, , \, d_{03} + d_{12} \,)$.\\

\noi \textup{(2)} $\Rightarrow$ \textup{(3)} see \cite{Eva2008}.\\

\noi
\textup{(3)} $\Rightarrow$ \textup{(1)}  results from corollary \ref{lipt}.
\end{proof}

In the special case when $M$ is separable, the $L_1$-space in condition (1) can be taken as $L_1$, any separable isometric subspace of an $L_1$-space being an isometric subspace of $L_1$. In case $M$ is finite, it can be replaced by $\ell_1^N$ where $N=2 \times \text{Card} M-3$ (it is shown in \cite{Bun1974} that a finite metric space with the four-point property isometrically embeds into a weighted tree with $2 \times \text{Card} M -2$ vertices).\\

Note that a polyhedral space isometrically embeds into $L_1$ if and only if its 3-dimensional subspaces do so (the so-called Hlawka's inequality which characterizes this property involves only three vectors); as far as Lipschitz-free spaces $\fz(M)$ are concerned, embeddability of the 3-dimensional subspaces corresponding to Lipschitz-free spaces of  4-points subsets of $M$ is enough, and we obtain a canonical embedding.

\section{\lipfss isomorphic to a subspace of $\Lun$}
We show in this section a very natural gluing result: if $M$ is a metric space of finite radius composed of uniformly apart subsets $M_{\gamma}$'s, $\fz(M)$ is essentially the $\ell_1$-sum of the Lipschitz-free spaces $\fz(M_\gamma)$. 

\begin{prop}
\label{isomorphism}
Let $\Gamma$ be a pointed set and $M=\dis \cup_{\gamma \in \Gamma} M_{\gamma}$ a metric space. If there exists positive constants $\alpha$ and $\beta$ such that $\alpha \leq d(x,y) \leq \beta$ whenever $x$ and $y$ belong to distincts $M_{\gamma}$'s, then we have the isomorphism
$$\fz(M) \simeq \left( \und{\gamma \in \Gamma}{\sum} \ \fz(M_\gamma) \right)_{\lun} \oplus_{1} \lun(\Gamma^{*}) \ .$$
\end{prop}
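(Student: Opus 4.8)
The plan is to prove the isomorphism by constructing explicit bounded linear maps in both directions between $\fz(M)$ and the space
$$
E = \left( \und{\gamma \in \Gamma}{\sum} \fz(M_\gamma) \right)_{\lun} \oplus_1 \lun(\Gamma^*),
$$
where $\Gamma^*$ denotes the non-origin elements of $\Gamma$. The natural strategy is to pick a base point $0_\gamma$ in each piece $M_\gamma$ (with the overall origin $0$ lying in $M_{\gamma_0}$ for the distinguished $\gamma_0 \in \Gamma$), and to think of $M$ as built from the disjoint pieces together with a ``skeleton'' recording how the base points $0_\gamma$ relate to $0$. The $\lun(\Gamma^*)$ summand should correspond to the free space of this skeleton, i.e. to the masses $\delta_{0_\gamma} - \delta_0$, while the internal structure of each $M_\gamma$ is captured by $\fz(M_\gamma)$. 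The decomposition $\delta_x = (\delta_x - \delta_{0_\gamma}) + (\delta_{0_\gamma} - \delta_0)$ for $x \in M_\gamma$ suggests the splitting.

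First I would define the candidate map $T : \fz(M) \to E$ on the dense subspace of finitely supported molecules. Each $\gamma$ carries a retraction $r_\gamma : M \to M_\gamma \cup \{0_\gamma\}$ collapsing everything outside $M_\gamma$ to $0_\gamma$; since the pieces are uniformly $\alpha$-apart and of bounded diameter (finite radius plus the bound $\beta$), each $r_\gamma$ is Lipschitz with a constant controlled by $\alpha, \beta$ and the radius, hence induces $\widehat{r_\gamma} : \fz(M) \to \fz(M_\gamma)$ by the functoriality recalled in the introduction. The first component of $T$ assembles these, and the second component records, for each $\gamma$, the total mass that a molecule places in $M_\gamma$, sending it to the coefficient of $\delta_{0_\gamma} - \delta_0$ in $\lun(\Gamma^*)$. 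One must check that $T$ lands in the $\ell_1$-sum with summable norm and is bounded; the uniform lower bound $\alpha$ is what forces the $\lun(\Gamma^*)$ coefficients to be summable and keeps the retraction constants uniform.

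Conversely I would build $S : E \to \fz(M)$ by sending each $\fz(M_\gamma)$ isometrically into $\fz(M)$ via the inclusion $M_\gamma \inj M$ (again functoriality: $M_\gamma$ embeds isometrically, so $\fz(M_\gamma)$ is an isometric subspace of $\fz(M)$), and sending the basis vector of $\lun(\Gamma^*)$ indexed by $\gamma$ to the molecule $(\delta_{0_\gamma} - \delta_0)/\|\delta_{0_\gamma}-\delta_0\|$, whose norm is $d(0_\gamma, 0)$ and hence lies between $\alpha$ and $\beta$. Boundedness of $S$ follows because the images of the summands have uniformly bounded norms and one sums in $\ell_1$. The final step is to verify that $S$ and $T$ are inverse up to a bounded correction, or more realistically that $S \circ T$ and $T \circ S$ are bounded below and above, yielding an isomorphism rather than an isometry; the upper and lower norm estimates are where the constants $\alpha$ and $\beta$ enter quantitatively.

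The main obstacle I anticipate is the lower bound, i.e. showing $\|Tx\|_E \gtrsim \|x\|_{\fz(M)}$, equivalently that the decomposition does not lose mass. For a general molecule one must estimate $\|x\|_{\fz(M)} = \sup_{\|f\|_{\lipz}\le 1} \langle f, x\rangle$ from below by reconstructing a competitive Lipschitz function $f$ on all of $M$ from its restrictions to the pieces $M_\gamma$ together with prescribed values at the $0_\gamma$'s. The difficulty is that gluing Lipschitz functions defined on the separate pieces into a single Lipschitz function on $M$ requires controlling cross-terms $|f(x) - f(y)|$ for $x \in M_\gamma$, $y \in M_{\gamma'}$ with $\gamma \ne \gamma'$; here the uniform separation $d(x,y) \ge \alpha$ gives the necessary slack, allowing an arbitrary offset between pieces to be absorbed into the Lipschitz constant at the cost of a factor depending on $\alpha$ and $\beta$. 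Making this gluing quantitative, and thereby matching the $\ell_1$-sum norm up to the isomorphism constant, is the technical heart of the argument; the boundedness of $T$ and $S$ and the upper estimates are comparatively routine consequences of functoriality and the bounds $\alpha \le d \le \beta$.
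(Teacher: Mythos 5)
Your proposal is correct in substance, but it is the paper's argument run in the predual direction rather than a genuinely independent route, and the comparison is instructive. The paper never touches molecules: it works entirely on the function side, defining the gluing map $\Phi$ from $\left(\sum_{\gamma} \lipz(M_\gamma)\right)_{\ell_\infty} \oplus_\infty \ell_\infty(\Gamma^*)$ to $\lipz(M)$ by setting $f = f_\gamma + \lambda_\gamma$ on $M_\gamma$, proving $\|\Phi\| \leq 2(\alpha+\beta+1)/\alpha$ by exactly the slack you identify (the offsets $\lambda_\gamma$ and the values $f_\sigma(x)$ are absorbed via $1 \leq d(x,y)/\alpha$ and $d(x,a_\sigma) \leq d(x,y)+\beta$), exhibiting the exact inverse $\Psi : f \mapsto ((f|_\gamma - f(a_\gamma))_\gamma \, , (f(a_\gamma))_{\gamma \in \Gamma^*})$ with $\|\Psi\| \leq \max(1,\beta)$, and concluding by weak*-weak* continuity that $\Phi$ is the transpose of the desired isomorphism of preduals. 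Your $T$ and $S$ are precisely the preadjoints of $\Phi$ and $\Psi$: for a molecule $x$ one has $\sum_\gamma \langle f_\gamma, \widehat{r_\gamma}x\rangle + \sum_\gamma \lambda_\gamma m_\gamma(x) = \langle \Phi(u), x\rangle$, where $m_\gamma(x)$ is your mass functional, so the step you call the technical heart (the lower bound for $T$) is literally the paper's boundedness estimate for $\Phi$. What the dual route buys is the elimination of all density and summability bookkeeping: once the two norm estimates on the Lipschitz side are in hand, the weak*-continuity transfer finishes the proof, whereas your route must additionally verify that $(\widehat{r_\gamma}x)_\gamma$ and $(m_\gamma(x))_\gamma$ are summable for a general element (for the masses this follows by testing against $\pm 1$-valued functions constant on each piece, which are Lipschitz of norm at most $2/\alpha$). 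Two small repairs to your sketch: first, drop the normalization of $S$ on the $\lun(\Gamma^*)$ basis and send $e_\gamma$ to $\delta_{0_\gamma} - \delta_0$ itself; then $S \circ T$ and $T \circ S$ are the identity on molecules, hence everywhere, and no argument about being ``inverse up to a bounded correction'' is needed. Second, the Lipschitz bound for each retraction $r_\gamma$ uses that every piece has diameter at most $2\beta$, which is not a separate finite-radius assumption but follows from the hypothesis whenever $\Gamma$ has at least two elements.
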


The  $\lun$-space compensates for the loss of dimensions due to the choice of a base point in each $M_{\gamma}$: each connected component of $M$ contributes for a dimension in this $\ell_1$-space.

\begin{proof}
For each $\gamma \in \Gamma$, we choose a base point $a_\gamma$ in $M_{\gamma}$ in order to define $\fz(M_\gamma)$, and use $a_0$ to define $\fz(M)$. We consider the following two maps.

$$
\Phi :
\begin{cases}
\begin{array}{ccl}
\left(\und{\gamma \in \Gamma}{\sum} \text{Lip}_0 (M_{\gamma}) \right)_{\ell_{\infty}} \oplus_{\infty}
\ell_{\infty}(\Gamma^*)
 & \to & \text{Lip}_0 (M)\\
((f_{\gamma})_{\gamma \in \Gamma} \, , (\lambda_{\gamma})_{\gamma \in \Gamma^*}) & \mapsto &
f =
\begin{cases}
\begin{array}{cc}
f_0 & \text{on} \ M_0 \\
f_\gamma + \lambda_{\gamma} & \text{on} \ M_\gamma
\end{array}
\end{cases}
\end{array}
\end{cases}
$$
\\
and

$$
\Psi :
\begin{cases}
\begin{array}{ccl}
\text{Lip}_0 (M)
 & \to & \left(\und{\gamma \in \Gamma}{\sum} \text{Lip}_0 (M_{\gamma}) \right)_{\ell_{\infty}} \oplus_{\infty}
\ell_{\infty}(\Gamma^*)
\\
f & \mapsto &
((f|_{\gamma}-f(a_\gamma))_{\gamma \in \Gamma} \, , (f(a_{\gamma}))_{\gamma \in \Gamma^*})
\end{array}
\end{cases}
$$
\\

First of all let us show that $\Phi$ is well-defined, and has norm less than 
$2 (\alpha+\beta+1)/\alpha$.
If $u=((f_{\gamma})_{\gamma \in \Gamma} \, , (\lambda_{\gamma})_{\gamma \in \Gamma^*})$, we have for any $x$ in $M_{\sigma}$ and $y$ in $M_{\tau}$,
$$|\Phi(u)(y)-\Phi(u)(x)| \, \leq \, |f_{\sigma}(x)|+|f_{\tau}(y)|+|\lambda_{\sigma}|+|\lambda_{\tau}|$$
\noi (with $\lambda_{0}=0$).  Now, assuming  $\sigma \neq \tau$, we get
$$
|f_{\sigma}(x)| \, \leq \, \|f_{\sigma}\| \, d(x,a_{\sigma}) \, \leq \, \|f_{\sigma}\| \, (d(x,y)+d(y,a_{\sigma}))
\, \leq \, \|u\| \, (d(x,y)+\beta) \quad .
$$

\noi The inequality $1 \, \leq \, \dfrac{1}{\alpha} \ d(x,y)$ yields

$$
|f_{\sigma}(x)| \, \leq \, \left(1+\frac{\beta}{\alpha} \right) \, \|u\| \, d(x,y)
\quad \text{and} \quad
|\lambda_{\sigma}| \, \leq \,   \frac{1}{\alpha} \, \|u\| \, d(x,y)
$$
these upper bounds being also valid for $\tau$. Hence
$$|\Phi(u)(y)-\Phi(u)(x)| \, \leq \, 2 \ \frac{\alpha+\beta+1}{\alpha} \ \|u\| \ d(x,y) \quad .$$

The latter inequality holds if $x$ and $y$ belong to the same $M_{\sigma}$, and we conclude that $\Phi(u)$ is a Lipschitz map of norm less than $2 \ \frac{\alpha+\beta+1}{\alpha} \ \|u\|$. The map $\Psi$ is the inverse of $\Phi$, and we have $\| \Psi \| \leq\max(1,\beta)$. The isomorphism $\Phi$ being weak*-weak* continuous, it is the transpose of a linear isomorphism between $\fz(M)$ and $\left(\oplus_{\ell_1} \fz(M_\gamma) \right) \oplus_{1} \lun(\Gamma^{*})$.
\end{proof}

Combined with the results of the preceding section, this gives canonical examples of Lipschitz-free spaces isomorphic to a subspace of $\Lun$ (and therefore unique isometric predual by \cite{GodTal1981}).

Examples of spaces $\fz(M)$ which are unique isometric preduals are plentiful in \cite{Kal2004}; N.J. Kalton shows the following result: if $M$ is an arbitrary metric space and $\eps >0$, then the space $\fz(M)$ is $(1+\eps)$-isometric to a subspace of 
$\left(\sum_{k \in \zz} \ \fz(M_k) \right)_{\lun}$
where $M_k=\left\{x \in M \, / \, d(x,0) \leq 2^k \hs \right\}$. It is then deduced that when $M$ is uniformly discrete (i.e. $\inf_{x \neq y} d(x \, , y) >0$), $\fz(M)$ is a Schur space with the Radon-Nikodym Property and the approximation property. In this case, $\fz(M)$ is unique isometric predual since it has the  Radon-Nikodym Property (see \cite{God1989} p.144). Also, if $K$ is a compact subset of a finite-dimensional normed space and $0<\alpha<1$, the metric space $K$ equipped with the distance $d(x,y)=\|y-x\|^\alpha$ has a Lipschitz-free space (denoted $\fz_\alpha(K)$) isomorphic to $\lun$: hence $\fz_\alpha(A)$ is unique isometric predual whenever $A$ is a subset of a finite-dimensional normed space. Note that Kalton shows that if $K$ is a compact convex subset of $\ell_2$ and is infinite-dimensional then $\fz_\alpha(K)$ cannot be isomorphic to $\ell_1$.

\nocite{GooWei1993}
\nocite{Rud1987}

\bibliographystyle{plain}
\bibliography{lip}

\def\cprime{$'$}
\begin{thebibliography}{10}

\bibitem{Bun1974}
P.~Buneman.
\newblock A note on the metric properties of trees.
\newblock {\em J. Combinatorial Theory Ser. B}, 17:48--50, 1974.

\bibitem{DutFer2006}
Y.~Dutrieux and V.~Ferenczi.
\newblock The {L}ipschitz free {B}anach spaces of {$C(K)$}-spaces.
\newblock {\em Proc. Amer. Math. Soc.}, 134(4):1039--1044, 2006.

\bibitem{Eva2008}
S.N. Evans.
\newblock {\em Probability and real trees}, volume 1920 of {\em Lecture Notes
  in Mathematics}.
\newblock Springer, Berlin, 2008.

\bibitem{God1989}
G.~Godefroy.
\newblock Existence and uniqueness of isometric preduals: a survey.
\newblock In {\em Banach space theory}, volume~85 of {\em Contemp. Math.},
  pages 131--193. Amer. Math. Soc., Providence, RI, 1989.

\bibitem{GodKal2003}
G.~Godefroy and N.J. Kalton.
\newblock Lipschitz-free {B}anach spaces.
\newblock {\em Studia Math.}, 159(1):121--141, 2003.

\bibitem{GodTal1981}
G.~Godefroy and M.~Talagrand.
\newblock Classes d'espaces de {B}anach \`a pr\'edual unique.
\newblock {\em C. R. Acad. Sci. Paris S\'er. I Math.}, 292(5):323--325, 1981.

\bibitem{GooWei1993}
P.~Goodey and W.~Weil.
\newblock Zonoids and generalisations.
\newblock In {\em Handbook of convex geometry, Vol.\ A, B}, pages 1297--1326.
  North-Holland, Amsterdam, 1993.

\bibitem{Kal2004}
N.J. Kalton.
\newblock Spaces of {L}ipschitz and {H}\"older functions and their
  applications.
\newblock {\em Collect. Math.}, 55(2):171--217, 2004.

\bibitem{NaoSch2007}
A.~Naor and G.~Schechtman.
\newblock Planar earthmover is not in {$L_1$}.
\newblock {\em SIAM J. Comput.}, 37(3):804--826, 2007.

\bibitem{Rud1987}
W.~Rudin.
\newblock {\em Real and complex analysis}.
\newblock McGraw-Hill Book Co., New York, third edition, 1987.

\bibitem{Wea1999}
N.~Weaver.
\newblock {\em Lipschitz algebras}.
\newblock World Scientific Publishing Co. Inc., River Edge, NJ, 1999.

\end{thebibliography}

\subsection*{Acknowledgments}$\quad$\\
The author thanks Guillaume Aubrun for having introduced him to the theory of zonoids, and Gilles Lancien for useful discussions and comments.

\end{document}